\numberwithin{equation}{section}
\newtheorem{theorem}{Theorem}[section]
\newtheorem{lem}[theorem]{Lemma}
\newtheorem{cor}[theorem]{Corollary}
\theoremstyle{definition}
\theoremstyle{notation}
\def\R{\mathop{\mathcal R}}
\def\L{\mathop{\mathcal L}}
\def\A{\mathop{\rm A}}
\begin{document}

\title[On the algebraic structures in $\A_\Phi(G)$]
{On the algebraic structures in $\A_\Phi(G)$}

%%% ----------------------------------------------------------------------

%%% ----------------------------------------------------------------------

\author[Ibrahim Akbarbaglu]{Ibrahim Akbarbaglu$^1$}

   \address{$^1$Department of Mathematics, Farhangian University,
  Tehran, Iran}
\email{ibrahim.akbarbaglu@gmail.com, i.akbarbaglu@cfu.ac.ir}

\author[Hasan P. Aghababa]{Hasan P. Aghababa$^2$}
    \address{$^2$Department of Mathematics, University of Tabriz,
  Tabriz, Iran}
\email{pourmahmood@gmail.com, h\_p\_aghababa@tabrizu.ac.ir}

    \address{$^2$School of Computing, University of Utah, Utah, USA}
\email{h.pourmahmoodaghababa@utah.edu}

\author[Hamid Rahkooy]{Hamid Rahkooy$^3$}
    \address{$^3$MPI Informatics, Saarbr\"ucken, Germany}
\email{hrahkooy@mpi-inf.mpg.de}

\keywords{Orlicz spaces, $N$-functions, porosity, locally compact groups, Fig\`a-Talamanca-Herz Orlicz algebras.}

\subjclass[2010]{Primary 43A15, 46E30 Secondary: 54E52.}

%%% ----------------------------------------------------------------------
\maketitle
%%% ----------------------------------------------------------------------

%\todo[inline,caption={}]{General comments:
%  \begin{enumerate}
%  \item please make sure that no single paragraph, definition, etc.
%  is copied from any reference. And if anything is taken from any
%    reference just put it inside a double quote and cite it.
%    \item I think you should use latex style of the specific journal
%      that you submit to/ At least in computer science if you don't do
%      that they usually immediately reject.
%  \end{enumerate}
%}

%%% ----------------------------------------------------------------------

%%% ----------------------------------------------------------------------
\begin{abstract}
Let $G$ be a locally compact group and $(\Phi, \Psi)$ be a
complementary pair of $N$-functions. In this paper, using the
powerful tool of porosity, it is proved that when $G$ is an amenable
group, then the Fig\`a-Talamanca-Herz-Orlicz algebra
${\A}_{\Phi}(G)$ is a Banach algebra under convolution product if
and only if $G$ is compact. Then it is shown that ${\A}_{\Phi}(G)$ is a Segal algebra,
and as a consequence, the amenability of ${\A}_{\Phi}(G)$ and the existence of a bounded approximate
identity for ${\A}_{\Phi}(G)$ under the convolution product is discussed. Furthermore, it is shown that for a compact abelian group $G$, the character space of
${\A}_{\Phi}(G)$ under convolution product can be identified with
$\widehat{G}$, the dual of $G$.
\end{abstract}
%%% ----------------------------------------------------------------------

%%%%%%%%%%%%%%%%%%%%%%%%%%%%%%%%%
%%%%%%%%%%%%%%%%%%%%%%%%%%%%%%%%%

\section{Introduction}

%%%%%%%%%%%%%%%%%%%%%%%%%%%%%%%%%
%%%%%%%%%%%%%%%%%%%%%%%%%%%%%%%%%

%\todo[inline]{General comment on Intro: maybe, we could write more
%  about: why the problem is interesting, why it is important, why this
%  work is non-trivial}

The Fourier algebra $\A(G)$ of a locally compact group $G$ was
introduced and studied by Eymard in \cite{Eym}. Then Herz in \cite{Her1, Her2}
generalized Fourier algebras into {\it Fig\`a-Talamanca-Herz
  algebras}, denoted by ${\A}_p(G)$ for $1\leq p\leq \infty$. Indeed, it was shown that $\A_2(G)=\A(G)$. The space
$\A_p(G)$ is determined by a homomorphic image of the projective tensor product of the classical Lebesgue spaces $L^p(G)$ and $L^q(G)$, where $q$ is the conjugate exponent to $p$. Fourier and Fig\`a-Talamanca-Herz algebras are well-known concepts in harmonic analysis, have been investigated thoroughly and there is a rich literature on them. For further background, we refer the reader to the valuable books by Pier \cite{Pie} and Derighetti \cite{Der}.

Orlicz spaces are a type of function spaces generalizing the Lebesgue
spaces. A great deal of linear aspects of Orlicz spaces have been
completely characterized in the last few decades.
Rao and Ren's books \cite{Rao1, Rao2}
contain comprehensive information on Orlicz spaces and so
in this paper, we use the definitions and basic facts from these two books.

In \cite{AA}, the first and second authors introduced the
Fig\`a-Talamanca-Herz-Orlicz algebra ${\A}_{\Phi}(G)$ for an $N$-function $\Phi$
and a locally compact group $G$. This is a natural
generalization of Fig\`a-Talamanca-Herz algebras ${\A}_p(G)$ to Orlicz
spaces and it is shown that
${\A}_{\Phi}(G)$ is a Banach algebra under pointwise product of
functions.

In this article, exploring this space more from the algebraic structure point of view, using the concept of porosity,
we address the following question: when is ${\A}_{\Phi}(G)$ a Banach
algebra under the convolution product? Indeed, in Section \ref{sec: conv-prodt},
assuming $G$ is amenable, we show that ${\A}_{\Phi}(G)$ is closed
under the convolution product if and only if $G$ is
compact. As a consequence, we show that ${\A}_{\Phi}(G)$ is a Segal
algebra. The latter statement, in turn, helps us to
characterize amenability of ${\A}_{\Phi}(G)$ and the existence of a
bounded approximate identity in ${\A}_{\Phi}(G)$.
Lastly, we characterize the character space of
${\A}_{\Phi}(G)$ under the convolution product when $G$ is compact and abelian.

%%%%%%%%%%%%%%%%%%%%%%%%%%%%%%%%%
%%%%%%%%%%%%%%%%%%%%%%%%%%%%%%%%%

\section{Preliminaries}

%%%%%%%%%%%%%%%%%%%%%%%%%%%%%%%%%
%%%%%%%%%%%%%%%%%%%%%%%%%%%%%%%%%

%\todo[inline,caption={}]{
%  \begin{enumerate}
%  \item some paragraphs are very short. Maybe you can glue some...
%  \item maybe you can use definition, thm, etc more?
%  \end{enumerate}}

In this paper, $G$ denotes a locally compact group with a fixed left
Haar measure $\lambda$.  As usual, by $L^0(G)$ we show the set of all
$\lambda$-measurable complex-valued functions on $G$ when $\lambda$-almost
everywhere  equal functions are identified.
The \textit{convolution} product of $f$ and $g$ in $L^0(G)$ is defined
on $G$ by
\[
(f*g)(x)=\int_G f(y)g(y^{-1}x)\,d\lambda(y),
\]
whenever the function $y\mapsto f(y)g(y^{-1}x)$ is Haar integrable for
$\lambda$-almost all $x\in G$.

A locally compact group $G$ is called {\it amenable} if it admits a
left invariant mean \cite{Pie, Run}. It turns out that the amenability
of $G$ can be characterized by {\it Leptin's condition}, that is, for
any $\varepsilon >0$ and compact subset $K\subseteq G$, there exists a
compact subset $U\subseteq G$ such that $0<\lambda(U)< \infty$ and
$\lambda(KU)<(1+\varepsilon)\lambda(U)$ (see \cite[Theorem 7.9,
Proposition 7.11]{Pie}).  For an overview concerning amenability of a
locally compact group see \cite{Pie, Run}.

%A nonzero convex function $\Phi:{\mathbb R} \to [0, \infty]$ is called a {\it Young function} if it satisfies $\Phi(0)=0$, $\Phi(-x)= \Phi(x)$ for each $x\in \mathbb{R}$ and $\lim_{x\to \infty} \Phi(x)= \infty$.

A convex even function $\Phi:{\mathbb R} \to [0, \infty)$ is called
an {\it $N$-function} if it satisfies $\Phi(0)=0$, $\lim_{x\to 0} \frac{\Phi(x)}{x}= 0$ and $\lim_{x\to \infty} \frac{\Phi(x)}{x}= \infty$.
The convexity of $\Phi$ and $\Phi(0)=0$ implies that $\Phi$ is strictly increasing and so invertible. Indeed, $N$-functions are Young functions with a reasonable behavior \cite{Rao1, Rao2}.

For each $N$-function one can associate another $N$-function
$\Psi:{\mathbb R} \to [0, \infty)$, termed the {\it complementary
  $N$-function} to $\Phi$, defined by
\[
  \Psi(y)=\sup\{x|y|-\Phi(x): x\geq 0\}.
\]
The pair $(\Phi, \Psi)$ is then called the {\it complementary pair of $N$-functions} and satisfies a useful inequality called {\it Young's inequality}:
\begin{equation}\label{Yi}
xy\leq \Phi(x)+\Psi(y) \qquad (x,y\in \mathbb{R}).
\end{equation}
%
%A Young function $\Phi$ is said to be an {\it $N$-function} whenever
%\[
%  \lim_{x\to 0} \frac{\Phi(x)}{x}= 0 \quad \mbox{and} \quad \lim_{x\to
%    \infty} \frac{\Phi(x)}{x}= \infty.
%\]
%In other words, $N$-functions are nice Young functions with a reasonable behavior. It follows from \cite[Corollary 1.3.2]{Rao1} and \cite[P. 11]{KR} that an $N$-function $\Phi$ is finite-valued and vanishes only at the origin. In particular, if $\Phi$ is an $N$-function then so is its complementary $\Psi$.
%
%It should be mentioned that, given an arbitrary Young function $\Phi$, an explicit description of its complementary function can be more complicated.
A number of various $N$-functions, borrowed from \cite{Rao2}, are listed below.
\begin{enumerate}
\item For each $1< p<\infty$, let $\Phi(x)=x^p/p$. Then
  $\Psi(y)=y^q/q$, where $\frac{1}{p}+\frac{1}{q}=1$.
\item If $\Phi(x)= (1+|x|) \log(1+|x|)-|x|$, then
  $\Psi(y)= e^{|y|}-|y|-1$.
\item Let $\Phi(x)=\cosh(x)-1$. Then
  $\Psi(x)=|x|\ln(|x|+\sqrt{1+x^2})-\sqrt{1+x^2}+1$.
  % \item For each $\delta>1$, consider $\Phi(x)=e^{|x|^{\delta}}-1$.
  % \item For each $p\geq 2$, let
  %   $\Phi(x)=\dfrac{|x|^p}{\log(e+|x|)}$.
  % \item For each $1<p<\infty$ and $c\geq \frac{2p-1}{p(p-1)}$, put
  %   $\Phi(x)= |x|^p \big(c+ \big|\log |x|\big|\big)$.
\end{enumerate}
% \end{exa}

%The {\it generalized inverse} of a Young function $\Phi$ is defined by
%\[
%  \Phi^{-1}: [0, \infty] \to [0, \infty], \qquad \Phi^{-1}(y)=\inf
%  \{x: \Phi(x)>y\}.
%\]
According to \cite[Proposition 2.1.1(ii)]{Rao1}, the inverse functions of a
complementary pair of $N$-functions $(\Phi, \Psi)$ satisfy the following inequality:
\begin{equation}\label{12}
x<\Phi^{-1}(x)\Psi^{-1}(x)\leq 2x.
\end{equation}

For each $f\in L^0(G)$ we define
\[
  \rho_\Phi(f)=\int_{G} \Phi\big(|f(x)|\big)\,d\lambda(x).
\]
Given an $N$-function $\Phi$, the {\it Orlicz space} $L^{\Phi}(G)$ is
defined by
\[
  L^{\Phi}(G)= \big\{f\in L^0(G):\, \rho_\Phi(af)< \infty \ \text{for
    some} \ a>0\big\}.
\]
Similarly, the {\it Morse-Transue space} on $G$ is
\[
  M^{\Phi}(G)= \big\{f\in L^0(G):\, \rho_\Phi(af)< \infty \ \text{for
    all} \ a>0\big\}.
\]

The Orlicz space $L^{\Phi}(G)$ is a Banach space under the
Luxemburg-Nakano norm $N_{\Phi}(\cdot)$ defined for any $f\in L^{\Phi}(G)$
by
\[
N_{\Phi}(f)=\inf\{k>0:\, \rho_\Phi(f/k)\leq 1\}.
\]
%It is well known that
%\begin{equation}\label{11}
%$N_{\Phi}(f)\leq 1 \quad \text{if and only if} \quad \rho_\Phi(f)\leq 1,$
%$\end{equation}
%which is a useful observation and will be employed here in several places.
Also, if $\chi_F$ denotes the characteristic function of a
subset $F\subseteq G$ with $0<\lambda(F)<\infty$, by \cite[corollary 3.4.7]{Rao1} we have
\begin{equation}\label{11}
N_{\Phi}(\chi_F)=\left[\Phi^{-1}\left(\frac{1}{\lambda(F)}\right)\right]^{-1}.
\end{equation}
Moreover, since $\Psi(x)>0$ for each $x \neq 0$, then
\[
  \|f\|_{\Phi}=\sup\bigg\{\int_{G} |fg| d\lambda : \, g \ \text{is
    measurable and}\ N_{\Psi}(g)\leq 1\bigg\},
\]
is another norm called the {\it Orlicz norm} on $L^{\Phi}(G)$.  It
follows from \cite[Proposition 3.3.4]{Rao1} that these two norms are
equivalent and for each $f\in L^{\Phi}(G)$,
\begin{equation}\label{eq6}
N_{\Phi}(f) \leq \|f\|_{\Phi} \leq 2N_{\Phi}(f).
\end{equation}

The space $\A_{\Phi}(G)$ introduced in \cite{AA}, consists of those
$u\in C_0(G)$, the space of all complex-valued continuous functions on
$G$ vanishing at infinity, such that there are sequences
$(f_n)_{n=1}^{\infty}\subseteq M^{\Phi}(G)$ and
$(g_n)_{n=1}^{\infty}\subseteq M^{\Psi}(G)$ with
$\sum_{n=1}^{\infty}N_{\Phi}(f_n)\|g_n\|_{\Psi}<\infty$ and
$u=\sum_{n=1}^{\infty}f_n\ast \check{g}_n$, where
$\check{g}_n(x) = g(x^{-1})$ for $x\in G$. The norm of
$u\in \A_{\Phi}(G)$ is defined by
\[
  \|u\|_{{\A}_{\Phi}}=\inf\left\{\sum_{n=1}^{\infty}N_{\Phi}(f_n)
    \|g_n\|_{\Psi} : u =\sum_{n=1}^{\infty} f_n * \check{g}_n\right\}.
\]
It can be readily seen that $\|u\|_{\infty} \leq \|u\|_{{\A}_{\Phi}}$
(see the proof of \cite[Proposition A.4.5]{Kan} for example).

%%%%%%%%%%%%%%%%%%%%%%%%%%%%%%%%%%%%%%%%%%%%%%%%%
%%%%%%%%%%%%%%%%%%%%%%%%%%%%%%%%%%%%%%%%%%%%%%%%%

\section{Convolution product on $\A_{\Phi}(G)$} \label{sec: conv-prodt}

%%%%%%%%%%%%%%%%%%%%%%%%%%%%%%%%%%%%%%%%%%%%%%%%%
%%%%%%%%%%%%%%%%%%%%%%%%%%%%%%%%%%%%%%%%%%%%%%%%%

This section is devoted to the clarification of the fact that when
$\A_{\Phi}(G)$ is closed under convolution product. To put in a
nutshell, in the case when $G$ is amenable, we will show that for an
$N$-function $\Phi$, the space $\A_{\Phi}(G)$ is a Banach algebra
under convolution product if and only if $G$ is compact. This
manifests the reason for choosing pointwise product for
$\A_{\Phi}(G)$.

Let us recall the notion of porosity. For a metric space $X$ the open
ball around $x\in X$ of radius $r>0$ is denoted by $B(x, r)$.  Given a
real number $0<c\leq 1$, a subset $M$ of $X$ is called
$c$-\textit{lower porous} if
\[
  \liminf_{R\to 0+}\frac{\gamma(x, M, R)}{R}\geq \frac{c}{2},
\]
for all $x\in M$, where
\[
  \gamma(x, M, R)=\sup\big\{r\geq0: \exists\, z\in X, B(z,r)\subseteq
  B(x,R)\setminus M\big\}.
\]
Clearly $M$ is $c$-lower porous if and only if
\[
  \forall x\in M \ \forall \alpha\in (0, c/2) \ \exists R_0>0 \
  \forall R\in(0, R_0) \ \exists z\in X, \ B(z, \alpha R)\subseteq
  B(x,R)\setminus M.
\]
A set is called $\sigma$-$c$-\textit{lower porous} if it is a
countable union of $c$-lower porous sets with the same constant $c>0$.

It is not hard to observe that a $\sigma$-$c$-lower porous set is
meager and the notion of $\sigma$-porosity is stronger than
meagerness. For more details see \cite{za}.

Let us also remark that for two normed spaces $\mathcal{A}$ and
$\mathcal{B}$, we can endow the spaces $\mathcal{A}\times\mathcal{B}$
and $\mathcal{A}\cap\mathcal{B}$ with the norms
\[
  \|(f,g)\|=\max\{\|f\|_{\mathcal{A}}, \|g\|_{\mathcal{B}}\} \quad
  {\rm and} \quad \|f\|
  =\|f\|_{\mathcal{A}}+\|f\|_{\mathcal{B}},
\]
respectively, for every $f\in\mathcal{A}$ and
$g\in\mathcal{B}$. Notice that we consider the fact that
$\mathcal{A}\cap\mathcal{B}$ is defined for two appropriate normed
spaces $\mathcal{A}$ and $\mathcal{B}$.

%----------------------------------------------------------------------
The following, taken from \cite{AA}, is a vital lemma in proving the
main result of this section. However, we include its proof here for
convenience and the fact that we will use some of its notation in next
theorems.
%----------------------------------------------------------------------
\begin{lem}\label{r}\cite{AA}
  Suppose that $G$ is an amenable locally compact group. Let $E$ be a
  compact subset of $G$ and $\Phi$ be an $N$-function. Then for every
  $\epsilon>0$ there exists a function
  $u\in \A_\Phi(G)\cap \A_\Psi(G)$ such that
  $\|u\|_{\A_\Phi}<2(1+\epsilon)$,
  $\|u\|_{\A_\Psi}=\|\check{u}\|_{\A_\Phi}<2(1+\epsilon)$ and $u=1$ on
  $E$.
\end{lem}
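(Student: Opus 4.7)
The plan is to use Leptin's condition---equivalent to amenability---to produce a compact set $U \subseteq G$ with $\lambda(EU)$ very close to $\lambda(U)$, and then build $u$ as a normalized convolution of indicator functions. Given $\epsilon > 0$, I would pick $\delta \in (0,\epsilon)$ and apply Leptin's condition with the compact set $E$ to get a compact $U \subseteq G$ satisfying $0 < \lambda(U) < \infty$ and $\lambda(EU) \leq (1+\delta)\lambda(U)$. Setting $V := EU$, I would define
\[
u \;:=\; \frac{1}{\lambda(U)}\,\chi_V \ast \check{\chi}_U.
\]
Since $V$ and $U$ have finite Haar measure, $\chi_V$ and $\chi_U$ lie in $M^\Phi(G) \cap M^\Psi(G)$, so the same formula exhibits $u$ simultaneously as an element of $\A_\Phi(G)$ (viewing $\chi_V \in M^\Phi$ and $\chi_U \in M^\Psi$) and of $\A_\Psi(G)$ (reversing the two roles).

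Next I would verify that $u\equiv 1$ on $E$: the convolution formula gives $(\chi_V \ast \check{\chi}_U)(x) = \lambda(V \cap xU)$, and for $x \in E$ one has $xU \subseteq EU = V$, so by left Haar invariance $\lambda(V \cap xU) = \lambda(xU) = \lambda(U)$, whence $u(x) = 1$. Then I would estimate the norms directly from the chosen representation,
\[
\|u\|_{\A_\Phi} \;\leq\; \frac{N_\Phi(\chi_V)\,\|\chi_U\|_\Psi}{\lambda(U)} \;\leq\; \frac{2}{\lambda(U)\,\Phi^{-1}\!\left(1/\lambda(V)\right)\,\Psi^{-1}\!\left(1/\lambda(U)\right)},
\]
invoking \eqref{11} to compute $N_\Phi(\chi_V) = 1/\Phi^{-1}(1/\lambda(V))$, and combining \eqref{eq6} with the analogue of \eqref{11} for $\Psi$ to bound $\|\chi_U\|_\Psi \leq 2/\Psi^{-1}(1/\lambda(U))$. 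The parallel estimate for $\|u\|_{\A_\Psi}$ follows from the identical computation with the roles of $\Phi$ and $\Psi$ swapped.

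The crux of the argument is the comparison of $\Phi^{-1}(1/\lambda(V))$ with $\Phi^{-1}(1/\lambda(U))$. Since $\Phi$ is convex with $\Phi(0) = 0$, the ratio $\Phi(x)/x$ is non-decreasing on $(0,\infty)$; applying this to $a = \Phi^{-1}(1/\lambda(U))$ and $b = \Phi^{-1}(1/\lambda(V)) \leq a$ yields $b/a \geq \lambda(U)/\lambda(V) \geq 1/(1+\delta)$. Substituting this and using the lower bound $\Phi^{-1}(y)\Psi^{-1}(y) > y$ from \eqref{12} at $y = 1/\lambda(U)$ gives $\|u\|_{\A_\Phi} < 2(1+\delta) < 2(1+\epsilon)$, and the same bound for $\|u\|_{\A_\Psi}$ by symmetry. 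The identity $\|u\|_{\A_\Psi} = \|\check{u}\|_{\A_\Phi}$ reflects the isometric involution $u \leftrightarrow \check{u}$ between $\A_\Phi(G)$ and $\A_\Psi(G)$; concretely, left Haar invariance gives $\check{u} = \lambda(U)^{-1}\,\chi_U \ast \check{\chi}_V$, which is the original formula with $U$ and $V$ interchanged. The principal obstacle is pinning down the ratio $\Phi^{-1}(1/\lambda(U))/\Phi^{-1}(1/\lambda(V))$, and this is precisely where amenability enters, through the Leptin slack $1+\delta$.
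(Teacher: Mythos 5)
Your proposal is correct and follows essentially the same route as the paper: Leptin's condition produces the compact set $U$, the function is the same normalized convolution $\frac{1}{\lambda(U)}\chi_{EU}\ast\check{\chi}_U$, and the norm bounds come from \eqref{11}, \eqref{eq6} and \eqref{12}. The only (harmless) difference is in the last estimate, where you compare $\Phi^{-1}(1/\lambda(EU))$ to $\Phi^{-1}(1/\lambda(U))$ via convexity of $\Phi$, whereas the paper simply uses monotonicity of $\Phi^{-1}$ and $\Psi^{-1}$ to replace both arguments by $1/\bigl((1+\epsilon)\lambda(U)\bigr)$ before invoking \eqref{12}.
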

\begin{proof}
  Consider a compact subset $F$ of $G$ with $\lambda(F)>0$, and define
  \[
    v(x)=\frac{1}{\lambda(F)}(\chi_{EF}\ast\check{\chi}_F)(x)=\frac{\lambda(xF\cap
      EF)}{\lambda(F)}.
  \]
  Then $v\in {\A}_\Phi(G)$ and $0\leq v \leq 1$. If $x\in E$, then
  $\lambda(xF\cap EF)=\lambda(xF)=\lambda(F)$, so that $v(x)=1$, while
  if $x\notin EFF^{-1}$, then $xF\cap EF=\emptyset$ and therefore
  $v(x)=0$. It follows that $\mbox{supp}\, (v)\subseteq EFF^{-1}$ is
  compact. Now, since $G$ is amenable, by Leptin's condition we may
  select a compact set $V\subseteq G$ of positive measure provided the
  inequality $\lambda(EV)<(1+\epsilon)\lambda(V)$ holds. Now let
  $f=\chi_{EV}$, $g=\frac{1}{\lambda(V)}\chi_V$ and
  $u=f\ast\check{g}$. Then, as $u$ is a special case of $v$, we have
  $u\in \A_\Phi(G)\cap \A_\Psi(G)$ and $u=1$ on $E$. Moreover, using
  \eqref{eq6} we obtain
  \begin{align*}
    \|u\|_{{\A}_\Phi}&\leq\frac{1}{\lambda(V)}\|\chi_V)\|_{\Psi}N_{\Phi}(\chi_{EV})
                       \leq \frac{2}{\lambda(V)}\left[\Phi^{-1}\left(\frac{1}{\lambda(EV)}\right)\right]^{-1}
                       \left[{\Psi}^{-1}\left(\frac{1}{\lambda(V)}\right)\right]^{-1}
                       \vspace{0.1cm} \\
                     & \leq\frac{2}{\lambda(V)}\left[\Phi^{-1}\left(\frac{1}{(1+\epsilon)\lambda(V)}\right)\right]^{-1}
                       \left[{\Psi}^{-1}\left(\frac{1}{(1+\epsilon)\lambda(V)}\right)\right]^{-1}<2(1+\epsilon).
  \end{align*}
  Similarly, we can get $\|u\|_{\A_\Psi}<2(1+\epsilon)$.
\end{proof}
%----------------------------------------------------------------------
As an immediate consequence of Lemma \ref{r} we can determine when
$\A_\Phi(G)$ is unital.
%----------------------------------------------------------------------
\begin{cor}\label{cor:unital}
  Let $G$ be a locally compact group and $\Phi$ be an
  $N$-function. Then $\A_\Phi(G)$ is unital if and only if $G$ is
  compact.
\end{cor}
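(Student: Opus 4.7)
The plan is to use that $\A_\Phi(G)$ sits inside $C_0(G)$ and that pointwise multiplication is the relevant product here, so a unit is forced to be the constant function $1$.

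For the forward direction, suppose $e\in\A_\Phi(G)$ is a unit. I would fix an arbitrary point $x_0\in G$ and apply Lemma \ref{r} with $E=\{x_0\}$ to produce a function $u\in\A_\Phi(G)$ with $u(x_0)=1$. Since $e$ is the identity for pointwise multiplication, $e\cdot u=u$, so evaluating at $x_0$ gives $e(x_0)=1$. As $x_0$ was arbitrary, $e\equiv 1$ on $G$. But $e\in\A_\Phi(G)\subseteq C_0(G)$, and a constant function on $G$ that is identically $1$ can vanish at infinity only when $G$ is compact. Thus $G$ is compact.

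For the converse, suppose $G$ is compact. Then $E:=G$ is itself a compact subset of $G$, so Lemma \ref{r} (with, say, $\epsilon=1$) provides a function $u\in\A_\Phi(G)$ with $u\equiv 1$ on $E=G$. In other words, the constant function $\mathbf{1}$ belongs to $\A_\Phi(G)$, and it is clearly the identity for pointwise multiplication. Hence $\A_\Phi(G)$ is unital.

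There is no real obstacle here, since both implications reduce to an application of Lemma \ref{r} once one observes that the product in question is pointwise multiplication and that therefore the unit, if it exists, must be the constant function $1$. The only subtlety worth articulating explicitly in the proof is the step identifying $e$ with $\mathbf{1}$, which relies on Lemma \ref{r} furnishing an element of $\A_\Phi(G)$ that does not vanish at an arbitrary prescribed point.
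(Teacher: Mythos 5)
Your proof is correct and follows essentially the same route as the paper's: the converse is verbatim the paper's argument (take $E=G$ in Lemma \ref{r}), and for the forward direction the paper simply asserts that the unit for pointwise multiplication must be the constant function $\mathbf{1}$, which is precisely the step you spell out by applying Lemma \ref{r} with $E=\{x_0\}$. One caveat: Lemma \ref{r} is stated only for \emph{amenable} $G$, while the corollary concerns an arbitrary locally compact group, so your appeal to it in the forward direction is formally outside the lemma's hypotheses. This is harmless, because all you need is some $u\in\A_\Phi(G)$ with $u(x_0)\neq 0$, and the function $v=\lambda(F)^{-1}\,\chi_{EF}\ast\check{\chi}_F$ constructed at the start of the lemma's proof already supplies this for any locally compact $G$ and any compact $E$; amenability enters only to control $\|u\|_{\A_\Phi}$, which your argument does not use. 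With that citation adjusted, the argument is complete.
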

\begin{proof}
  If $G$ is compact, taking $E=G$ in Lemma \ref{r} implies that the
  constant function $1$ belongs to $\A_\Phi(G)$ and so it is
  unital. On the other hand, if $\A_\Phi(G)$ is unital, considering
  the fact that $\A_\Phi(G)$ is equipped with pointwise product, we
  conclude that its unit is the constant function $1$. Since
  $\A_\Phi(G) \subseteq C_0(G)$, we earn $1 \in C_0(G)$. Therefore,
  $G$ is compact.
\end{proof}
%----------------------------------------------------------------------
\begin{theorem} \label{p} Let $G$ be an amenable locally compact group
  and $\Phi$ be an $N$-function.  If $G$ is not compact and $V$ is a
  symmetric compact neighborhood of the identity, then the set
  \[
    E=\big\{(f,g)\in {\A}_\Phi(G)\times\big({\A}_\Phi(G)\cap
    {\A}_\Psi(G)\big): |f*\check{g}(x)|<\infty, \forall\,x\in V
    \big\},
  \]
  is a $\sigma$-$c$-lower porous set for some $c>0$.
\end{theorem}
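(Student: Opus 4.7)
The plan is to reduce to showing that a countable family of sub-level sets is uniformly $c$-lower porous, then manufacture the required perturbation from Lemma~\ref{r} applied to a compact plateau chosen far from where $f,g$ are sizable. Since $V$ is a neighborhood of $e$, any $(f,g)\in E$ satisfies $\int_G|f(y)g(y)|\,d\lambda(y)<\infty$ (this being exactly the finiteness of $(f*\check{g})(e)$), so $E\subseteq\bigcup_{n\geq 1}E_n$ where
\[
E_n:=\Big\{(f,g)\in{\A}_\Phi(G)\times\big({\A}_\Phi(G)\cap{\A}_\Psi(G)\big):\int_G|f(y)g(y)|\,d\lambda(y)\leq n\Big\}.
\]
Because countable unions of $c$-lower porous sets are $\sigma$-$c$-lower porous, it will be enough to show each $E_n$ is $c$-lower porous for one fixed $c>0$.

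To do this, I would fix $n$, $(f,g)\in E_n$, and $R>0$. Since $\A_\Phi(G)\subseteq C_0(G)$, I first pick a compact set $K_0\subseteq G$ off which $|f|,|g|<R/33$. Using non-compactness of $G$ (so $\lambda(G)=\infty$) and inner regularity of Haar measure, I then select a compact set $K\subseteq G\setminus K_0$ with $\lambda(K)>33^2 n/(4R^2)$, and invoke Lemma~\ref{r} on $K$ to produce $u\in\A_\Phi(G)\cap\A_\Psi(G)$ with $u\equiv 1$ on $K$ and $\|u\|_{\A_\Phi},\|u\|_{\A_\Psi}<4$. The candidate point is $(f_0,g_0):=(f+\beta u,g+\beta u)$ with $\beta:=4R/33$; one checks
\[
\|(f_0,g_0)-(f,g)\|\leq\beta\bigl(\|u\|_{\A_\Phi}+\|u\|_{\A_\Psi}\bigr)<8\beta=32R/33,
\]
so the choice $r:=R/33$ yields $B((f_0,g_0),r)\subseteq B((f,g),R)$.

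For any $(h_1,h_2)\in B((f_0,g_0),r)$, writing $h_1=f_0+p$ and $h_2=g_0+q$, the embedding $\|\cdot\|_\infty\leq\|\cdot\|_{\A_\Phi}$ forces $\|p\|_\infty,\|q\|_\infty<R/33$. On $K$ we have $u\equiv 1$ and $|f|,|g|<R/33$, which gives
\[
|h_i(y)|\geq\beta-|f(y)|-|p(y)|>2R/33\qquad(i=1,2),
\]
hence $\int_G|h_1 h_2|\,d\lambda\geq(2R/33)^2\lambda(K)>n$ by the choice of $\lambda(K)$, so $(h_1,h_2)\notin E_n$. Thus $B((f_0,g_0),r)\cap E_n=\emptyset$, and since $r=(2/33)(R/2)$ the set $E_n$ is $(2/33)$-lower porous. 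The only real delicacy is the coordinated tuning of the three scales—the $C_0$-threshold, the perturbation amplitude $\beta$ forced by the product-norm constraint, and the plateau measure $\lambda(K)$ needed to overwhelm $n$—together with the remark that non-compactness of $G$ is precisely what lets $\lambda(K)$ be taken arbitrarily large inside $G\setminus K_0$, while amenability enters only through Lemma~\ref{r}.
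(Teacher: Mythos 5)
Your proof is correct, and it takes a genuinely different route from the paper at the two places where the real work happens. First, you reduce membership in $E$ to finiteness of the single integral $\int_G|fg|\,d\lambda$ at the point $e\in V$, whereas the paper keeps the uniform condition over all of $V$ in its sets $E_n$ and must therefore exhibit blow-up of $\int_G|h(y)||k(z^{-1}y)|\,d\lambda(y)$ for $z$ ranging over a sub-neighborhood $U\subseteq V$, using continuity of $k$; your reduction makes that step unnecessary (and is legitimate, since a subset of a $c$-lower porous set is $c$-lower porous). Second, and more substantially, you handle the danger that the bump $\beta u$ could be cancelled by $f$ or $g$ by placing the plateau $K$ outside a compact set where $|f|,|g|<R/33$, exploiting ${\A}_\Phi(G)\subseteq C_0(G)$ together with the facts that a non-compact locally compact group has infinite Haar measure and that Haar measure is inner regular on open sets. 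The paper instead decomposes $G$ into the four ``sign quadrants'' determined by the signs of $\mathrm{Re}\,f$ and $\mathrm{Re}\,g$, uses pairwise disjoint translates $a_mV$ to locate a quadrant piece of large measure, and adds positive bumps there so that moduli can only increase; your version avoids this decomposition entirely and is arguably cleaner, at the cost of invoking the regularity of Haar measure rather than constructing the large-measure set by hand (the paper's disjoint-translates construction is essentially a self-contained proof that $\lambda(G)=\infty$). Both arguments use Lemma~\ref{r} in exactly the same way (amenability enters only there), and your bookkeeping of the three scales $R/33$, $\beta=4R/33$, and $\lambda(K)>33^2n/(4R^2)$ checks out, yielding the explicit constant $c=2/33$; since the theorem only asks for some $c>0$, this is more than enough.
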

\begin{proof}
  Let $V$ be a symmetric compact neighborhood of $e$, the identity of
  $G$.  Since $G$ is not compact, there exists a sequence
  $(a_m)_{m\in\mathbb{N}} \subseteq G$ such that
  $a_iV\cap a_jV=\emptyset$ for distinct natural numbers $i$ and $j$.
  To be quite explicit, let $a_1=e$. Suppose
  $a_1, a_2, \cdots, a_{m-1}$ are chosen.  Since $G$ is not compact,
  there exists $a_m\in G$ so that
  $a_m\notin\bigcup_{i=1}^{m-1}a_i V^2$. Then the collection
  $\{a_m V\}_{m\in\mathbb{N}}$ contains pairwise disjoint subsets of
  $G$. Now for each $n\in\mathbb{N}$ set
  \[
    E_n=\left\{(f,g)\in {\A}_\Phi(G)\times \big({\A}_\Phi(G)\cap
      {\A}_\Psi(G)\big): \int_{G}|f(y)||g(x^{-1}y)|d\lambda(y)\leq n,
      \forall\,x\in V \right\}.
  \]
  Clearly, $E=\bigcup_{n\in {\Bbb N}}E_n$. Hence, we only need to show
  that for each $n\in {\Bbb N}$, $E_n$ is $c$-lower porous for some
  $0<c<1/32$.

  Fix $n\in\mathbb{N}$ and $R>0$ and suppose $(f,g)\in E_n$. Then set
  \[
    A_f^1:=\big\{x\in G: \mbox{Re}f(x)\geq0\big\} \quad \mbox{and}
    \quad A_f^{-1}:=\big\{x\in G: \mbox{Re}f(x)<0\big\}.
  \]
  In the same way we define $A_g^1$ and $A_g^{-1}$. It follows that
  \[
    G=\bigcup_{i,j \in \{1,-1\}}\Big(A_{f}^i\cap A_{g}^j\Big).
  \]
  Then for some function $s:\{1,2\}\to \{1,-1\}$, we can get
  \[
    \lambda\bigg(\bigcup_{m=1}^\infty \Big(a_mV\cap A_f^{s(1)}\cap
    A_g^{s(2)}\Big)\bigg)=\infty.
  \]
  Assume, without loss of generality, that $s(1)=s(2)=1$.  Hence there
  exist $m_0\in\mathbb{N}$ such that
  \[
    \lambda\left(\bigcup_{m=1}^{m_0}\big(a_mV\cap A_f^1\cap
      A_g^1\big)\right)>\frac{512n}{R^2}.
  \]
  Let $K=\bigcup_{m=1}^{m_0} \big(a_mV\cap A_f^1\cap A_g^1\big)$.
  Applying Lemma \ref{r} with $\varepsilon=1$, there exists
  $u\in \A_\Phi(G)\cap {\A}_\Psi(G)$ with $0\leq u\leq 1$, $u=1$ on
  $K$ and $\|u\|_{{\A}_\Phi}+\|u\|_{{\A}_\Psi}\leq 8$. Define
  functions $\widetilde{f}$ and $\widetilde{g}$ on $G$ by setting
  \[
    \widetilde{f}(x):=f(x)+\frac{R}{8}u(x) \quad \mbox{and} \quad
    \widetilde{g}(x):=g(x)+\frac{R}{16}u(x).
  \]
  Now, it can be easily verified that
  $B\big((\widetilde{f},\widetilde{g}),R/32\big)\subset
  B\big((f,g),R\big)$. Therefore, it suffices to show
  $B\big((\widetilde{f},\widetilde{g}),R/32\big)\cap E_n=\emptyset$.
  Fix $(h,k)\in B\big((\widetilde{f},\widetilde{g}),R/32\big)$ and let
  $K_1=\{x\in K:|h(x)|\geq R/16\}$. Then we have
  \begin{align*}
    R/32
    >&
       \|\widetilde{f}-h\|_{{\A}_\Phi}\geq\|\widetilde{f}-h\|_\infty
       \geq
       \sup_{x\in K\setminus K_1}\left\{\big||\widetilde{f}(x)|-|h(x)|\big|\right\}\geq R/16.
  \end{align*}
  However, this is impossible and thus $K_1=K$. Similarly,
  \[
    R/32>\|\widetilde{g}-k\|_{{\A}_\Phi \cap {\A}_{\Psi}} \geq
    \sup_{x\in
      G}\left\{\big||\widetilde{g}(x)|-|k(x)|\big|\right\}\geq
    \sup_{x\in K}\{R/16-|k(x)|\},
  \]
  from which for every $x\in K$ we obtain $|k(x)|>R/32$. On the other
  hand, since $k$ is a continuous function, we can find a symmetric
  compact neighborhood $U$ contained in $V$ such that $|k(x)|>R/32$
  for every $x\in UK$. Now consider an arbitrary element $z\in
  U$. Then one may conclude that
  \begin{align*}
    \int_{K} |h(y)||k(z^{-1}y)|d\lambda(y)\geq\frac{R^2}{512}\lambda(K)>n.
  \end{align*}
  Thus $(h,k)\notin E_n$, as required.
\end{proof}
%----------------------------------------------------------------------
\begin{cor} \label{cp} Let $G$ be an amenable locally compact group
  and $\Phi$ be an $N$-function.  Then the convolution of each two
  functions in ${\A}_{\Phi}(G)$ exists if and only if $G$ is compact.
  In particular, ${\A}_{\Phi}(G)$ is a Banach algebra under
  convolution product if and only if $G$ is compact.
\end{cor}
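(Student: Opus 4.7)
The plan is to prove both directions of the equivalence and then deduce the Banach-algebra assertion.

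\emph{Necessity.} Suppose $G$ is not compact; I shall derive a contradiction from the assumed existence of all convolutions in $\A_\Phi(G)$. Fix a symmetric compact neighbourhood $V$ of $e$. Theorem \ref{p} says the set $E$ of pairs $(f,g) \in \A_\Phi(G) \times (\A_\Phi(G) \cap \A_\Psi(G))$ for which $f*\check{g}$ is everywhere finite on $V$ is $\sigma$-$c$-lower porous, hence meager, in the complete Banach-space product. Baire's theorem therefore produces a pair $(f, g)$ outside $E$. A direct inspection of the defining series $u = \sum_n f_n * \check{g}_n$ (with $f_n \in M^\Phi(G)$, $g_n \in M^\Psi(G)$) shows, after the substitution $y \mapsto x^{-1}y$, that $\check{u} = \sum_n g_n * \check{f}_n$; hence the flip $\vee$ interchanges $\A_\Phi(G)$ and $\A_\Psi(G)$ and in particular maps $\A_\Phi(G) \cap \A_\Psi(G)$ into itself. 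Consequently $\check{g} \in \A_\Phi(G)$, and the pair $f, \check{g} \in \A_\Phi(G)$ has no convolution, contradicting the hypothesis.

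\emph{Sufficiency.} Suppose $G$ is compact. Then $\lambda(G) < \infty$ forces $L^\Phi(G), L^\Psi(G) \subseteq L^1(G)$, so every convolution integral arising from the defining series of two elements $u,v \in \A_\Phi(G)$ converges absolutely. Given expansions $u = \sum_n f_n * \check{g}_n$ and $v = \sum_m h_m * \check{k}_m$, associativity of convolution gives
\[
u * v = \sum_{n,m} (f_n * \check{g}_n * h_m) * \check{k}_m,
\]
and the Young-type inequality for Orlicz convolutions on a finite-measure group yields both $f_n * \check{g}_n * h_m \in M^\Phi(G)$ and a summable norm bound of the shape $\|u * v\|_{\A_\Phi} \leq C \|u\|_{\A_\Phi}\|v\|_{\A_\Phi}$, where $C$ depends only on $\lambda(G)$. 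This simultaneously shows closure under convolution and joint continuity, upgrading $\A_\Phi(G)$ to a Banach algebra and thereby settling the ``in particular'' assertion.

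The main obstacle is the sufficiency half: obtaining the norm bound $\|u*v\|_{\A_\Phi} \leq C\,\|u\|_{\A_\Phi}\|v\|_{\A_\Phi}$ requires a careful application of the finite-measure Young inequality for Orlicz convolutions so that the iterated convolutions remain inside $M^\Phi(G)$ with controlled Luxemburg and Orlicz norms. The necessity half is a clean application of Theorem \ref{p} together with Baire category, modulo the short verification that $\vee$ maps $\A_\Phi(G) \cap \A_\Psi(G)$ into $\A_\Phi(G)$, which is immediate from the series description of elements of $\A_\Phi(G)$.
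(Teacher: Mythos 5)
Your necessity argument is essentially the paper's: Theorem \ref{p} plus Baire category produces a pair $(f,g)$ outside $E$, and your extra verification that the flip $g\mapsto\check{g}$ carries $\A_\Psi(G)$ into $\A_\Phi(G)$ is correct (it is already implicit in Lemma \ref{r}, where $\|u\|_{\A_\Psi}=\|\check{u}\|_{\A_\Phi}$) and is genuinely needed to turn the pair $(f,g)$, whose second coordinate lives only in $\A_\Phi(G)\cap\A_\Psi(G)$, into two functions of $\A_\Phi(G)$ whose convolution fails to exist; the paper elides this point.

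For sufficiency you take a genuinely different and considerably heavier route, and it is here that your write-up has a gap. The paper's observation is that on a compact group $\A_\Phi(G)\subseteq C(G)=C_0(G)$ and every continuous function lies in both $M^{\Phi}(G)$ and $M^{\Psi}(G)$; hence for $u,v\in\A_\Phi(G)$ the single-term representation $u*v=u*(\check{v})^{\vee}$ already exhibits $u*v$ as an element of $\A_\Phi(G)$ with $\|u*v\|_{\A_\Phi}\leq N_{\Phi}(u)\,\|\check{v}\|_{\Psi}$. The submultiplicativity constant then comes from the open mapping theorem applied to the identity map from $\big(C(G),\|\cdot\|_\infty+N_{\Phi}(\cdot)\big)$ to $\big(C(G),\|\cdot\|_\infty\big)$, which gives $N_{\Phi}(u)\leq\alpha\|u\|_\infty\leq\alpha\|u\|_{\A_\Phi}$ and similarly $\|\check{v}\|_{\Psi}\leq\beta\|v\|_{\A_\Phi}$. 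No double series, no associativity manipulation, and no Orlicz--Young inequality are needed. Your route through $u*v=\sum_{n,m}(f_n*\check{g}_n*h_m)*\check{k}_m$ can probably be completed (bound $\|f_n*\check{g}_n\|_\infty$ by H\"older, use $h_m\in L^1(G)$ on the compact group, and note that bounded functions lie in $M^{\Phi}(G)$ with $N_{\Phi}$-norm controlled by the sup-norm), but as written the two load-bearing steps --- the ``Young-type inequality'' placing $f_n*\check{g}_n*h_m$ in $M^{\Phi}(G)$ with a summable norm bound, and the interchange of the infinite sums with convolution --- are asserted rather than proved, and you yourself flag the first as the main obstacle. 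I would replace this half with the one-term representation above.
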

\begin{proof}
  Since every Banach space is of second category, the Banach space
  $\A_{\Phi}(G)\times\big({\A}_\Phi(G)\cap {\A}_\Psi(G)\big)$ cannot
  coincide with the set $E$ in Theorem \ref{p} provided $G$ is not
  compact.

  Conversely, let $G$ be compact and $C(G)$ denote the space of
  continuous functions on $G$. Then $C(G)=C_0(G)$ and
  \[
    {\A}_{\Phi}(G) * {\A}_{\Phi}(G) \subseteq C(G) * C(G) \subseteq
    {\A}_{\Phi}(G)
  \]
  whence ${\A}_{\Phi}(G)$ is closed under convolution product.
  Moreover, since $C(G)$ is included in $M^{\Phi}(G)$, it is a Banach
  space with respect to the norm
  $\|f\|_0:=\|f\|_{\infty}+N_{\Phi}(f)$, for $f\in C(G)$. Let
  $I : \big(C(G),\|\cdot\|_0\big) \to
  \big(C(G),\|\cdot\|_{\infty}\big)$ be the identity map, which is
  continuous and one-to-one. By a consequence of the open mapping
  theorem there exists some $\alpha>0$ such that
  $N_{\Phi}(f)\leq \alpha\|f\|_{\infty}$ for all $f\in C(G)$. The same
  argument can be used to derive some $\beta>0$ in order to have
  $\|\check{f}\|_{\Psi}\leq\beta \|f\|_{\infty}$.  Now by the
  definition of $\|\cdot\|_{{\A}_\Phi}$ for $u, v \in {\A}_{\Phi}(G)$
  we have
  \[
    \|u\ast v\|_{{\A}_\Phi}\leq N_{\Phi}(u)\|\check{v}\|_{\Psi}\leq
    \alpha\beta\|u\|_{\infty}\|v\|_{\infty}\leq\alpha\beta\|u\|_{{\A}_\Phi}\|v\|_{{\A}_\Phi}.
  \]
  Therefore $\big({\A}_{\Phi}(G),\|\cdot\|_{{\A}_\Phi}\big)$ is a
  Banach algebra.
\end{proof}

We remark that, unfortunately, we were unable to prove Corollary
\ref{cp} without taking the advantage of amenability property of
$G$. However, we conjecture that this results holds without the
amenability constraint.

%%%%%%%%%%%%%%%%%%%%%%%

% We recall that a Banach algebra $(B, \|\cdot\|_B)$ is called an {\it
% abstract Segal algebra} with respect to a Banach algebra
% $(A, \|\cdot\|_A)$ if it satisfies the following four conditions:
%\begin{enumerate}
%\item $B$ is dense in $A$;
%\item $B$ is a left ideal in $A$;
%\item There is a positive constants $C$ such that
%  $\|b\|_A \leq C \|b\|_B$ for all $b \in B$;
%\item There is a positive constants $C'$ such that
%  $\|ab\|_B \leq C' \|a\|_A \|b\|_B$ for all $a, b \in B$.
%\end{enumerate}

We recall that a subspace $S(G)$ of $L^1(G)$ is called a {\it Segal
  algebra} if it satisfies the following four conditions:
\begin{enumerate}
\item $S(G)$ is dense in $L^1(G)$;
\item $\big(S(G), \| \cdot \|_S\big)$ is a Banach space such that
  $\| f\|_1 \leq \|f\|_S$ for all $f \in S(G)$;
\item $\L_t f \in S(G)$ for all $f \in S(G)$ and $t \in G$, where
  $\L_t f (x) = f(t^{-1}x)$ for $x \in G$;
\item For all $f\in S(G)$ the map $\Gamma_f: G \to S(G)$ by
  $\Gamma_f(t) = \L_t f$ is continuous.
\end{enumerate}

A Segal algebra is called {\it symmetric} if
\begin{enumerate}
\item $\R_t f \in S(G)$ for all $f \in S(G)$ and $t \in G$, where
  $\R_t f(x) = f(xt)$ for $x \in G$;
\item For all $f\in S(G)$ the map $\Lambda_f: G \to S(G)$ by
  $\Lambda_f(t) = \R_t f$ is continuous.
\end{enumerate}

According to Proposition 1 of \cite{Joh}, every Segal algebra is an
abstract Segal algebra (see \cite{Joh, Bur} for its definition) with
respect to $L^1(G)$.

As an immediate consequence of Corollary \ref{cp}, we obtain the following.
%%%%%%%%%%%%%%%%%%%%%%%%%%
\begin{theorem} \label{thm: Sym-Segal} Let $G$ be a compact group with
  normalized left Haar measure and $\Phi$ be an $N$-function.  Then
  $\big({\A}_{\Phi}(G),\|\cdot\|_{{\A}_\Phi}\big)$ is a symmetric
  Segal algebra with respect to $L^1(G)$. In particular,
  $\big({\A}_{\Phi}(G), \|\cdot\|_{{\A}_\Phi}\big)$ is an abstract
  Segal algebra with respect to $L^1(G)$.
\end{theorem}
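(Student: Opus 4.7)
The plan is to verify the four Segal-algebra axioms together with the two symmetry conditions, exploiting three features of the compact case: $\lambda(G)=1$ (normalized Haar measure), $G$ is unimodular (so $\|\check g\|_\Psi=\|g\|_\Psi$), and $C(G)=C_0(G)\subseteq L^\infty(G)\subseteq M^\Phi(G)\cap M^\Psi(G)$. The Banach-space structure of $\A_\Phi(G)$ is already available from \cite{AA}, so condition~(2) reduces to the norm inequality $\|f\|_1\leq\|f\|_{{\A}_\Phi}$; this is immediate from $\|f\|_1\leq\lambda(G)\|f\|_\infty$ combined with the previously recorded $\|f\|_\infty\leq\|f\|_{{\A}_\Phi}$.

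For density in $L^1(G)$, I would first observe $C(G)\ast C(G)\subseteq\A_\Phi(G)$: if $f,g\in C(G)$, then both lie in $M^\Phi(G)\cap M^\Psi(G)$, so the one-term expression $f\ast\check g$ already satisfies the defining representation of $\A_\Phi(G)$. A bounded approximate identity $(e_\alpha)\subseteq C(G)$ for $L^1(G)$ then shows $C(G)\ast C(G)$ is dense in $L^1(G)$, and combining with density of $C(G)$ in $L^1(G)$ completes condition~(1).

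Conditions~(3) and~(4) turn on the identity $\L_t(f\ast\check g)=(\L_tf)\ast\check g$, which follows from the left-invariance of Haar measure. Applied termwise to $u=\sum_n f_n\ast\check g_n$, this gives $\L_tu=\sum_n(\L_tf_n)\ast\check g_n$; since $N_\Phi$ is left-translation invariant, the same norm estimate persists, so $\|\L_tu\|_{{\A}_\Phi}\leq\|u\|_{{\A}_\Phi}$ and $\A_\Phi(G)$ is left translation invariant (in fact left translation is isometric). For continuity, I would use a finite-sum approximation: given $\epsilon>0$, choose $N$ with $\sum_{n>N}N_\Phi(f_n)\|g_n\|_\Psi<\epsilon/4$, then apply the classical continuity of $t\mapsto\L_tf_n$ in $N_\Phi$-norm on $M^\Phi(G)$ to the finitely many remaining terms and combine with the tail estimate.

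The symmetric conditions follow by the same mechanism, using $\R_t(f\ast\check g)=f\ast\check h$ with $h=\L_tg$, plus unimodularity to move between $g$ and $\check g$ while preserving $\|\cdot\|_\Psi$. The step requiring the most care is translation continuity: one must invoke the classical translation-continuity lemma on $M^\Phi(G)$ (not on $L^\Phi(G)$, where it may fail absent the $\Delta_2$-condition), which is precisely why the definition of $\A_\Phi(G)$ restricts to $f_n\in M^\Phi(G)$ and $g_n\in M^\Psi(G)$. Once this classical fact is invoked for both $M^\Phi(G)$ and $M^\Psi(G)$, the finite-sum approximation handles left and right translation continuity in parallel, and the final statement about being an abstract Segal algebra is then a direct application of Proposition~1 of \cite{Joh}.
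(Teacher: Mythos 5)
Your proposal is correct, and the overall skeleton (verify the four Segal axioms plus the two symmetry conditions, using $\lambda(G)=1$ for the norm inequality and the identities $\L_t(f\ast\check g)=(\L_tf)\ast\check g$, $\R_t(f\ast\check g)=f\ast(\L_tg)\check{\ }$ together with the isometry of translation on $M^{\Phi}(G)$ for invariance) matches the paper's. You diverge in two places. For density in $L^1(G)$ the paper invokes Lemma \ref{r} to see that ${\A}_{\Phi}(G)$ separates points and contains constants, and then applies Stone--Weierstrass (using the pointwise-product algebra structure from \cite{AA}); you instead observe $C(G)\ast C(G)\subseteq{\A}_{\Phi}(G)$ and push a continuous bounded approximate identity of $L^1(G)$ through. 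Your route is more elementary and avoids Stone--Weierstrass, though it leans on the algebra structure less; both are sound. For translation continuity the paper simply cites Proposition 5.3 of \cite{AA}, whereas you reprove it by truncating the series $u=\sum_n f_n\ast\check g_n$ and invoking norm-continuity of translation on $M^{\Phi}(G)$ and $M^{\Psi}(G)$ termwise --- which is essentially the content of the cited proposition, and your remark that this is exactly why the definition restricts to the Morse--Transue spaces (where the $\Delta_2$-condition is not needed) is a point the paper leaves implicit. The only cosmetic slip is that unimodularity is not really needed to handle $\R_t$, since the defining norm already uses $\|g_n\|_{\Psi}$ rather than $\|\check g_n\|_{\Psi}$ and left translation is isometric on $M^{\Psi}(G)$ for any locally compact group; this does not affect correctness.
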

\begin{proof}
  By Lemma \ref{r}, $\A_{\Phi}(G)$ strongly separates the points of
  $L^1(G)$. Therefore, $\A_{\Phi}(G)$ is dense in $L^1(G)$ by the
  Stone-Weierstrass theorem. Also, since $\lambda(G)=1$, for any
  $f\in\A_{\Phi}(G)$ we have
  $\|f\|_1\leq\|f\|_{\infty}\leq\|f\|_{\A_\Phi}$.

  To complete the proof we note that for any Young function $\Phi$,
  $M^{\Phi}(G)$ is left translation invariant and
  $\|\L_t h\|_{\Phi} = \|h\|_{\Phi}$ whenever $h\in M^{\Phi}(G)$.  Now
  take $u\in\A_{\Phi}(G)$. For simplicity assume $u = f * \check{g}$,
  where $f \in M^{\Phi}(G)$ and $g \in M^{\Psi}(G)$.  Then for any
  $t \in G$, it can be readily observed that
  $\L_t u=(\L_t f) * \check{g}$ and $\R_t u = f * (\L_t g{\check{)}}$.
  This yield that $\L_t u, \R_t u \in \A_{\Phi}(G)$ and
  $\|\L_t u\|_{\A_{\Phi}} =\|\R_t u\|_{\A_{\Phi}} =
  \|u\|_{\A_{\Phi}}$.  Finally, by Proposition 5.3 in \cite{AA} the
  mappings $t \to \L_t f$ and $t \to \R_t f$ from $G$ to
  $\A_{\Phi}(G)$ are continuous.
\end{proof}
%%%%%%%%%%%%%%%%%%%%%%%%%

It turns out that a symmetric Segal algebra is indeed a two-sided
ideal in $L^1(G)$ and has a $L^1$-bounded approximate identity (see
\cite{Rei1, Rei2}). The followings are some consequences of the
preceding theorem.  For the notion of amenability of a Banach algebra
see \cite{Run}.

\begin{cor}\label{cor:am}
Let $G$ be a compact group, $\Phi$ be an $N$-function and consider $\A_{\Phi}(G)$ as a Banach algebra under convolution product. Then the following are equivalent:
\begin{itemize}
\item[(i)] $\A_{\Phi}(G)$ has a bounded approximate identity.
\item[(ii)] $\A_{\Phi}(G)$ is unital.
\item[(iii)] $\A_{\Phi}(G)$ is amenable.
\item[(iv)] $G$ is finite.
\end{itemize}
\end{cor}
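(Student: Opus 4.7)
The plan is to prove the four conditions equivalent via the cycle $(iv)\Rightarrow(ii)\Rightarrow(i)\Rightarrow(iv)$, supplemented by $(iii)\Rightarrow(i)$ and $(iv)\Rightarrow(iii)$, which closes (iii) into the loop. The implication $(iv)\Rightarrow(ii)$ is immediate from Corollary \ref{cor:unital} since a finite group is compact, and $(ii)\Rightarrow(i)$ is trivial because a unit is itself a bounded approximate identity.

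The substantive step is $(i)\Rightarrow(iv)$. By Theorem \ref{thm: Sym-Segal}, $\A_\Phi(G)$ is an abstract Segal algebra in $L^1(G)$. I would invoke the classical theorem going back to Burnham \cite{Bur} that an abstract Segal algebra admitting a bounded approximate identity in its own norm must coincide with the ambient algebra; consequently $\A_\Phi(G)=L^1(G)$ as sets. Combined with $\A_\Phi(G)\subseteq C_0(G)\subseteq L^\infty(G)$, this forces $L^1(G)\subseteq L^\infty(G)$. The closed graph theorem then supplies $M>0$ such that $\|f\|_\infty\leq M\|f\|_1$ for every $f\in L^1(G)$. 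Applying this inequality to $f=\chi_U/\lambda(U)$ for arbitrarily small open neighborhoods $U$ of the identity gives $\lambda(U)\geq 1/M$, which pins $\{e\}$ as an open set; hence $G$ is discrete and, being compact, finite.

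For $(iii)\Rightarrow(i)$ I would use the standard fact, due to Johnson, that every amenable Banach algebra possesses a bounded approximate identity (see \cite{Run}). Finally, for $(iv)\Rightarrow(iii)$: when $G$ is finite, $L^1(G)$ is finite-dimensional and $\A_\Phi(G)$ is a dense subalgebra of it by Theorem \ref{thm: Sym-Segal}, so in fact $\A_\Phi(G)=L^1(G)=\mathbb{C}[G]$ as algebras (the two norms being automatically equivalent on this finite-dimensional space). Johnson's theorem identifies amenability of $\ell^1(G)$ with amenability of $G$, and since finite groups are amenable, $\A_\Phi(G)$ is amenable.

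I expect the main obstacle to be $(i)\Rightarrow(iv)$: it rests on the (nontrivial) Burnham characterization of bounded approximate identities in abstract Segal algebras combined with the elementary but necessary measure-theoretic observation that $L^1\subseteq L^\infty$ on a compact group forces discreteness. The remaining steps are either direct verifications or classical invocations.
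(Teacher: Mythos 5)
Your overall route is essentially the paper's: the substantive implication $(i)\Rightarrow(iv)$ is carried by Burnham's theorem that an abstract Segal algebra with an approximate identity bounded in its own norm must equal the ambient algebra $L^1(G)$, and $(iii)\Rightarrow(i)$ is Runde's observation that amenability forces a bounded approximate identity. Your derivation of finiteness from $\A_\Phi(G)=L^1(G)$ (via $L^1\subseteq L^\infty$, closed graph, and the lower bound on measures of neighborhoods of $e$) is a correct variant of the paper's terser ``$C(G)=L^1(G)$, hence $G$ is finite,'' and you even make $(iv)\Rightarrow(iii)$ explicit where the paper leaves it implicit.

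There is, however, one genuine error: you justify $(iv)\Rightarrow(ii)$ by citing Corollary \ref{cor:unital}, but that corollary is about $\A_\Phi(G)$ under the \emph{pointwise} product --- its proof identifies the unit as the constant function $1$, which is not a unit for convolution. Worse, if Corollary \ref{cor:unital} did apply to the convolution algebra, it would give a unit for \emph{every} compact $G$, contradicting the very equivalence $(ii)\Leftrightarrow(iv)$ you are proving; the paper explicitly contrasts the two corollaries to highlight this difference between the two products. The implication itself is of course trivially true and easily repaired: for finite $G$ every function on $G$ lies in $\A_\Phi(G)$, so the suitably normalized point mass at the identity belongs to $\A_\Phi(G)$ and is the convolution unit. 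Alternatively, once you have $(iv)\Rightarrow(iii)$ and $(iii)\Rightarrow(i)$ you could extract $(ii)$ from the identification $\A_\Phi(G)=L^1(G)=\mathbb{C}[G]$ directly. With that one citation replaced, the proof is sound.
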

\begin{proof}
As $\A_{\Phi}(G)$ is a Segal algebra by Theorem \ref{thm:
Sym-Segal}, according to \cite[Theorem 1.2]{Bur}, it cannot have a
$\|\cdot\|_{\A_{\Phi}}$-bounded approximate identity unless
$\A_{\Phi}(G) = L^1(G)$. Since $\A_{\Phi}(G) \subseteq C(G) \subseteq L^1(G)$, we conclude that $C(G) = L^1(G)$ and so $G$ is finite. This also
is equivalent to the existence of a unit for $\A_{\Phi}(G)$. As
amenability implies the existence of a bounded approximate identity
by \cite[Proposition 2.2.1]{Run}, (iv) is equivalent to others too.
\end{proof}

We remark that Corollaries \ref{cor:unital} and \ref{cor:am} show that
how algebraic structures can make really different spaces.

The {\it character space} of a Banach algebra $A$ is defined to be the
set of all bounded multiplicative linear functionals on $A$ and is
denoted by $\Delta(A)$.  As another application of Theorem \ref{thm: Sym-Segal}, along with \cite[Theorem 2.1]{Bur}, we present the following result about compact
 abelian groups.

\begin{cor}
Let $G$ be a compact  abelian group and $\Phi$ be an
$N$-function. Then $\Delta\big(\A_\Phi(G)\big)$ is homeomorphic to
$\widehat{G}$, the dual of $G$, where $\A_\Phi(G)$ is considered with convolution product.
\end{cor}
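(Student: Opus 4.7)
The plan is to chain together two ingredients: Theorem \ref{thm: Sym-Segal}, which tells us that $\bigl(\A_{\Phi}(G),\|\cdot\|_{\A_{\Phi}}\bigr)$ is an abstract Segal algebra in $L^1(G)$, and the classical Gelfand-theoretic identification of $\Delta(L^1(G))$ with $\widehat{G}$ for a locally compact abelian group. The quoted result \cite[Theorem 2.1]{Bur} supplies the bridge between the character spaces of an abstract Segal algebra and its ambient algebra.

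First I would record that, by Theorem \ref{thm: Sym-Segal}, $\A_{\Phi}(G)$ is a symmetric Segal algebra with respect to $L^1(G)$, hence in particular an abstract Segal algebra in $L^1(G)$ under convolution. Then I would invoke \cite[Theorem 2.1]{Bur}: for an abstract Segal algebra $S$ inside a commutative Banach algebra $B$, the restriction map
\[
\Delta(B)\longrightarrow \Delta(S),\qquad \varphi\longmapsto \varphi\!\restriction_{S},
\]
is a homeomorphism (with respect to the Gelfand topologies). The surjectivity relies on extending any character of $S$ to $B$ by continuity using density of $S$, and the injectivity likewise uses that $S$ is $\|\cdot\|_B$-dense in $B$. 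Both density requirements are exactly the properties of $\A_{\Phi}(G)$ established in the proof of Theorem \ref{thm: Sym-Segal}.

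Next I would apply the classical Gelfand representation of $L^1(G)$ for a locally compact abelian group: the character space $\Delta\bigl(L^1(G)\bigr)$ is homeomorphic to $\widehat{G}$ via
\[
\gamma\longmapsto \varphi_{\gamma},\qquad \varphi_{\gamma}(f)=\int_{G} f(x)\,\overline{\gamma(x)}\,d\lambda(x).
\]
Composing this homeomorphism with the restriction homeomorphism from the previous step yields $\Delta\bigl(\A_{\Phi}(G)\bigr)\cong \widehat{G}$, as required.

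I do not anticipate a genuine obstacle here, since the work was done in Theorem \ref{thm: Sym-Segal} and Corollary \ref{cp}. The only point that deserves a brief sentence in the write-up is to confirm that the hypotheses of \cite[Theorem 2.1]{Bur} are all met---namely, density of $\A_{\Phi}(G)$ in $L^1(G)$ and the continuous inclusion $\|\cdot\|_1\le \|\cdot\|_{\A_{\Phi}}$---both of which have already been verified. One should also note that the identification is topological: restriction is continuous because each $f\in\A_{\Phi}(G)$ satisfies $|\varphi(f)|\le \|f\|_{\A_\Phi}$, and its inverse (extension) is continuous by density together with the boundedness of extended characters on $L^1(G)$.
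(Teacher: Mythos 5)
Your proposal is correct and follows exactly the route the paper intends: the corollary is stated as an application of Theorem \ref{thm: Sym-Segal} together with \cite[Theorem 2.1]{Bur}, which identifies $\Delta\big(\A_\Phi(G)\big)$ with $\Delta\big(L^1(G)\big)\cong\widehat{G}$. The paper gives no further details, so your fleshing out of the restriction/extension homeomorphism and the density hypotheses is a faithful expansion of the same argument.
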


\subsection*{Remark.} 

In the first draft of the paper it was shown that the character space of $\A_{\Phi}(G)$ under pointwise product coincides with $G$, for an arbitrary locally compact group $G$ and an $N$-function $\Phi$. It is, however, pointed out by the reviewer that it was already given in \cite{LK} (see Theorems 3.6(i), 3.7 and Corollary 3.8). Therefore, we preferred to do not include them in the final version of the paper. However, it is worth mentioning that our approach was taken from \cite{Kan} (specially Theorem 2.9.4), which was a bit different from the approach given in \cite{LK}.

\providecommand{\bysame}{\leavevmode\hbox to3em{\hrulefill}\thinspace}

%%% ----------------------------------------------------------------------


\begin{thebibliography}{999}

\bibitem{AA} Aghababa, H.P., Akbarbaglu, I. \emph{Fig\`a-Talamanca-Herz-Orlicz algebras and convoluters of Orlicz spaces}.
 Mediterr. J. Math. {\bf 17}, 106 (2020).

\bibitem{Bur} J. T. Burnham, \emph{Closed ideals in subalgebras of Banach algebras, I.},  Proc. Amer. Math. Soc. {\bf 32} (1972), 551--555.


\bibitem{Der} A. Derighetti, \emph{Convolution operators on groups}, Lecture Notes of the Unione Matematica Italiana 11, Springer, Heidelberg, 2011.

\bibitem{Eym} P. Eymard, \emph{L'alg\`{e}bre de Fourier d'un groupe localement compact},
    Bull. Soc. Math. France, {\bf 92} (1964), 181--236.

\bibitem{Fol} G. B. Folland, \emph{A course in abstract harmonic analysis}, CRC Press, Boca Raton, 1995.

\bibitem{Kan} E. Kaniuth, \emph{A Course in Commutative Banach Algebras}, pringer, 2009.

\bibitem{Her1} C. Herz, \emph{The theory of $P$-spaces with an application to convolution operators},
    Trans. Amer. Math. Soc. {\bf 154} (1971), 69--82.

\bibitem{Her2}  C. Herz, \emph{Harmonic synthesis for subgroups}, Ann. Inst.
    Fourier (Grenoble) {\bf 23} (1973), 91--123.

\bibitem{Pie}  J. P. Pier, \emph{Amenable locally compact groups}, John Wiley and Sons, New York, 1984.

\bibitem{Joh} B. E. Johnson, \emph{Weak amenability of group algebras}, Bull. London Math. Soc. {\bf 23} (1991), 281--284.

\bibitem{LK} R. Lal and N. S. Kumar, \emph{Orlicz Fig\`a -- Talamanca Herz algebras and invariant means}, Indag. Math., {\bf 30} (2019), 340--354.

\bibitem{KR} M. A. Krasnosel'ski\u{\i}, Ya. B. Ruticki\u{\i}, \emph{Convex functions and Orlicz spaces},
Noordhoff, Groningen, 1961.

\bibitem{Rao1}  M. M. Rao, Z. D. Ren, \emph{Theory of Orlicz spaces}, Marcel Dekker, Inc. 1991.

\bibitem{Rao2}  M. M. Rao, Z. D. Ren, \emph{Applications of Orlicz spaces}, Marcel Dekker, Inc. 2002.

\bibitem{Rei1} H. Reiter, \emph{$L^1$-algebras and Segal Algebras}, Lecture Notes in Math. 231, Springer, Berlin, 1971.

\bibitem{Rei2} H. Reiter, \emph{Classical Harmonic Analysis and Locally Compact Groups}, Oxford University Press, Oxford, 1968.

\bibitem{Run}  V. Runde, \emph{Lectures on amenability}, volume 1774 of Lecture Notes in Mathematics, Springer-Verlag, Berlin, 2002.

\bibitem{za} L. Zaj$\acute{\mbox{i}}\check{\mbox{c}}$ek, \emph{On $\sigma$-porous sets in abstract spaces}, Abstr. Appl. Anal., {\bf 5} (2005), 509--534.

\end{thebibliography}
\end{document}